\theoremstyle{plain}
\newtheorem{theorem}{Theorem}
\newtheorem{lemma}[theorem]{Lemma}
\newtheorem{proposition}[theorem]{Proposition}
\newtheorem{claim}[theorem]{Claim}
\newtheorem*{theorem*}{Theorem}
\newtheorem*{lemma*}{Lemma}
\theoremstyle{definition}
\newtheorem*{definition*}{Definition}
\newtheorem*{example*}{Example}
\theoremstyle{remark}
\newtheorem*{remark*}{Remark}
\newcommand{\comment}[1]{}
\newcommand{\edge}[2]{\ensuremath{\left\{#1,#2\right\}}}
\newcommand{\dedge}[2]{\ensuremath{\left(#1,#2\right)}}
\newcommand{\E}{\ensuremath{E}}
\def\imod#1{\allowbreak\mkern5.5mu({\operator@font mod}\,\,#1)}
\title{The number of $F$-matchings in almost every tree is a zero residue}
\date{}
\author{
Noga Alon
\thanks{
Sackler School of Mathematics and Blavatnik School of Computer
Science, Raymond and Beverly Sackler Faculty of Exact Sciences,
Tel Aviv University, Tel Aviv, 69978, Israel. Email:
{\tt{nogaa@tau.ac.il}}. Research supported in part by an ERC
advanced grant, by a USA-Israeli BSF grant
and by the Hermann Minkowski Minerva Center for
Geometry at Tel Aviv University. } \and Simi Haber
\thanks{Sackler School of Mathematical Sciences, Raymond and Beverly Sackler
Faculty of Exact Sciences, Tel Aviv University, Tel Aviv, 69978,
Israel. E-mail: {\tt{habbersi@post.tau.ac.il}}.}
 \and Michael Krivelevich
\thanks{Sackler School of Mathematical Sciences, Raymond and Beverly Sackler
Faculty of Exact Sciences, Tel Aviv University, Tel Aviv, 69978,
Israel. E-mail: {\tt{krivelev@post.tau.ac.il}}. Research supported
in part by USA-Israel BSF Grant 2006322, by grant 1063/08 from the
Israel Science Foundation, and by a Pazy memorial award.} 
}
\begin{document}

\maketitle

\begin{abstract}
For graphs $F$ and $G$ an \emph{$F$-matching} in $G$ is a subgraph
of $G$ consisting of pairwise vertex disjoint copies of $F$. The
number of $F$-matchings in $G$ is denoted by $s(F,G)$. We show that
for every fixed positive integer $m$ and every fixed tree $F$, the
probability that $s(F,\mathcal{T}_n) \equiv 0 \imod{m}$, where
$\mathcal{T}_n$ is a random labeled tree with $n$ vertices, tends to
one exponentially fast as $n$ grows to infinity. A similar result is
proven for induced $F$-matchings. This generalizes a recent result
of Wagner who showed that the number of independent sets in a random
labeled tree is almost surely a zero residue.
\end{abstract}

\section{Introduction}
The number of independent sets in graphs is an important counting
parameter. It is particularly well-studied for trees and tree-like
structures.  Prodinger and Tichy showed in
\cite{A:prodinger_&_tichy1982} that the star and the path maximize
and  minimize, respectively, the number of independent sets among
all trees of a given size. Part of the interest in this graph
invariant stems from the fact that the number of independent sets
plays a role in statistical physics as well as in mathematical
chemistry, where it is known as the \emph{Merrifield-Simmons index}
\cite{B:merrifield_&_simmons1989}. A problem that arises in this
context is the inverse problem: determine a graph within a given
class of graphs (such as the class of all trees) with a given number
of independent sets. It is an open conjecture \cite{A:linek1989}
(see also \cite{A:li_&_li_&_wang2003}) that all but finitely many
positive integers can be represented as the number of independent
sets of some tree. Recently Wagner \cite{A:wagner2009} published a
surprising result that may partially explain why the inverse problem
for independent sets in trees is difficult. He showed that for every
positive integer $m$, the number of independent sets in a random
tree with $n$ vertices is zero modulo $m$ with probability
exponentially close to one. Wagner's proof does not give an
intuitive explanation of the aforementioned fact. In this paper we
give a probabilistic proof for Wagner's result. Our proof is
intuitive and simple, thus allowing us to generalize the result
significantly. We refer the reader to \cite{A:wagner2009} for
further motivation and for a recent survey of previous results
regarding the number of independent sets in trees.

Another graph parameter popular in statistical physics and in
mathematical
chemistry is the \emph{Hosoya index} which is the number of
matchings in the graph. While the inverse problem for the number of
matchings in trees is easy, as the star with $n$ vertices has
exactly $n$ matchings, finding the distribution of this number is
still open, as is the case with
the number of independent sets. Wagner mentions
in \cite{A:wagner2009} that his method could be applied to the
number of matchings as well, showing that asymptotically this number
is typically
divisible by any constant $m$. This may serve as an explanation for
the hardness of obtaining distribution results.

Both independent sets and matchings are special cases of
$F$-matchings. Let $F$ and $G$ be graphs. An \emph{$F$-matching} in
$G$ is a subgraph of $G$ consisting of pairwise vertex disjoint
copies of $F$. We say that the $F$-matching is \emph{induced} in $G$
if no additional edge of $G$ is spanned by the vertices of $G$
covered by the matching. These two closely related notions
generalize naturally matchings and independent sets. Indeed, if $F$
is the graph with two vertices and one edge then an $F$-matching is
simply a matching. If $F$ is a single vertex then an induced
$F$-matching is an independent set.

Given graphs $F$ and $G$ we denote the set of $F$-matchings in $G$
by $S(F,G)$ and its size by $s(F,G)$. The set of all induced
$F$-matchings in $G$ is denoted by $S'(F,G)$ with $s'(F,G) =
|S'(F,G)|$ being its size.

In this paper $G$ will be drawn at random from a probability space
of graphs. We define the \emph{random tree} $\mathcal{T}_n$ to be
the set of all $n^{n-2}$ labeled trees on $n$ vertices endowed with
the uniform distribution.

Our main results are the following:
\begin{theorem} \label{Thm: main thm, general}
Let $F$ be a tree that is not a single vertex and let $m$ be a
positive integer. Then there is a constant $c=c(F,m)>0$ such that
the number of $F$-matchings in the random tree $\mathcal{T}_n$ is
zero modulo $m$ with probability at least $1-e^{-cn}$.
\end{theorem}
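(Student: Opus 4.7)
The plan is to reduce the theorem to finding a fixed ``gadget'' rooted tree $(H,r)$ with two properties: (G1) whenever $(H,r)$ appears as a pendant subtree of any tree $T$ attached via $r$, the count $s(F,T)$ is divisible by $m$; and (G2) the random labeled tree $\mathcal{T}_n$ contains such a pendant copy of $(H,r)$ with probability at least $1-e^{-cn}$. A single appearance of the gadget suffices to force $s(F,\mathcal{T}_n)\equiv 0\pmod{m}$, so combining (G1) and (G2) yields the theorem.

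The model case for (G1) is $F=K_2$ (plain matchings). Take $(H,r)$ where $r$ has two children $u_1,u_2$, each carrying $m-1$ additional pendant leaves. Let $T_0:=T\setminus(H\setminus\{r\})$ denote the ``trunk,'' and let $\mu_A,\mu_B$ be the number of matchings of $T_0$ in which $r$ is, respectively, matched and unmatched in $T_0$. A case analysis on whether $r$ is matched externally, to $u_1$, to $u_2$, or left unmatched yields
\[
s(K_2,T)=m\bigl[m\,\mu_A+(m+2)\,\mu_B\bigr],
\]
verifying (G1) for $F=K_2$. The factor of $m$ comes from the $(m-1)+1=m$ local options at each spider $u_i$ (leave $u_i$ unmatched, or match it to one of its $m-1$ leaves). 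For general $F$, the plan is to replace each spider by a suitable symmetric construction built from copies of $F$---for instance, $m-1$ pendant copies of $F-\ell$ attached at a common vertex, where $\ell$ is a fixed leaf of $F$---so that, in each of the finitely many ways an $F$-copy in an $F$-matching can interact with $r$ (no $F$-copy covers $r$, or exactly one does with $r$ playing a specified role of $F$), the local contribution is automatically a multiple of $m$.

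For (G2), use the Pr\"ufer bijection $\mathcal{T}_n\leftrightarrow[n]^{n-2}$ and the fact that $(H,r)$ has bounded size. A standard expected-value computation shows that the number of pendant copies of $(H,r)$ in $\mathcal{T}_n$ has mean $\Theta(n)$, and exponential concentration about this mean follows from a bounded-differences (Azuma/McDiarmid) argument on the Pr\"ufer coordinates, since changing one coordinate alters the number of pendant copies of $H$ by $O(1)$. Hence the probability that $\mathcal{T}_n$ contains no pendant copy of $(H,r)$ is at most $e^{-cn}$, proving (G2).

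The main obstacle is the gadget construction in (G1) for arbitrary $F$. When $|F|\ge 3$, an $F$-copy used in an $F$-matching can cross the boundary of $H$ in many distinct ways, with any vertex of $F$ a potential image of $r$, and the local contribution for each of these finitely many ``states'' must be a multiple of $m$ simultaneously. The guiding design principle is to embed an explicit $m$-fold combinatorial symmetry into $H$---typically by attaching $m$ interchangeable branches to $r$---so that every local $F$-configuration is repeated $m$ times and thereby automatically contributes a factor of $m$ to the count.
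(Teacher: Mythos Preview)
Your two-part framework---a deterministic ``nullifying'' gadget $(H,r)$ plus an exponential tail bound for its appearance as a pendant subtree---is exactly the skeleton of the paper's proof. The probabilistic half (G2) is essentially right in spirit, though the paper uses the Joyal correspondence $[n]^{[n]}\leftrightarrow$ (doubly-marked trees) rather than Pr\"ufer codes; with Joyal, changing one value $f(i)$ visibly destroys or creates at most two pendant copies of any fixed $R$, so the Lipschitz constant is immediate. Your bounded-differences claim for Pr\"ufer coordinates (``changing one coordinate alters the number of pendant copies of $H$ by $O(1)$'') is plausible but not entirely obvious---a single coordinate change can rewire several edges via a cascade in the decoding---so you would need to supply that lemma carefully.

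The real gap is in (G1) for general $F$. Your $K_2$ example works, but notice \emph{why} it works: the star $K_{1,m-1}$ already satisfies $s(K_2,K_{1,m-1})=m\equiv 0\pmod m$, and you then attach $\Delta(K_2)+1=2$ such stars to $r$ so that, however an edge covers $r$, at least one star survives as an untouched component and contributes a factor of $m$. That is not an ``$m$-fold symmetry'' argument; it is a ``surviving component with divisible count'' argument. Your proposed generalization---$m-1$ pendant copies of $F-\ell$ at a common vertex---does not produce a tree $Y$ with $s(F,Y)\equiv 0\pmod m$ in general. For instance, take $F=P_3$ and $\ell$ a leaf: a spider with $m-1$ legs of length two has $s(P_3,\cdot)=1+\binom{m}{2}$, which fails for $m=3,4,\ldots$. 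More broadly, attaching $m$ interchangeable branches to $r$ does not force divisibility: when $r$ is uncovered the contribution is $(\text{branch count})^m$ times a trunk factor, and when an $F$-copy through $r$ meets several branches the multiplicity is a binomial coefficient, neither of which is automatically $\equiv 0\pmod m$.

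What the paper does instead is decouple the two steps. First it constructs, for any $F$ with an edge, a single tree $Y$ with $s(F,Y)\equiv 0\pmod m$: it strings together $t$ copies of $F$ along a longest path to form $W_t$, extracts a natural nested family $Y_r\subset W_t$, and shows $g(r):=s(F,Y_r)$ satisfies a linear recurrence $g(r)=g(r-1)+g(r-d)$ with integer coefficients and $g(-1)=0$. Since such a recurrence is purely periodic modulo $m$ (you can run it backwards because the trailing coefficient is $1$), some $g(r_0)\equiv 0\pmod m$, and $Y:=Y_{r_0}$ works. Second, it builds $Z$ by attaching $\Delta(F)+1$ disjoint copies of $Y$ to a new root $r$; then for every way an $F$-copy can cover $r$ (or leave it uncovered), at least one copy of $Y$ remains as a full connected component of the residual forest, so the count in that class is a multiple of $s(F,Y)\equiv 0$. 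The step you are missing is precisely this recurrence/periodicity construction of $Y$; the symmetry heuristic is not a substitute for it.
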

Note that when $F$ is a single vertex, the number of $F$-matchings in any
graph with $n$ vertices is $2^n$.

\begin{theorem} \label{Thm: main thm, induced}
Let $F$ be a tree and let $m$ be a positive integer. Then there is a
constant $c'=c'(F,m)>0$ such that the number of induced
$F$-matchings in the random tree $\mathcal{T}_n$ is  zero
modulo $m$ with probability at least $1-e^{-c'n}$.
\end{theorem}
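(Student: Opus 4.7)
My plan is to mirror the proof of Theorem~\ref{Thm: main thm, general} and reduce the claim to a local gadget argument. Fix a small rooted labeled tree $H$ (the \emph{gadget}, to be constructed below) and establish two separate facts. \emph{Deterministic step:} every tree $G$ containing at least one pendant copy of $H$ attached at its root satisfies $m \mid s'(F,G)$. \emph{Probabilistic step:} the random tree $\mathcal{T}_n$ contains a pendant copy of $H$ with probability at least $1-e^{-c'n}$. The two together prove Theorem~\ref{Thm: main thm, induced}.

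The probabilistic step is standard. Via the Pr\"ufer sequence representation of labeled trees, the number of pendant copies of any fixed bounded rooted tree $H$ in $\mathcal{T}_n$ is $\Theta(n)$ in expectation, and Azuma's inequality applied to the Pr\"ufer coordinates gives exponential concentration around the mean, which is more than enough to ensure at least one such pendant copy with probability $1-e^{-\Omega(n)}$.

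For the deterministic step, write $G = G' \cup H$ with the root $r$ of $H$ identified with a vertex $u$ of $G'$. Partition $S'(F,G)$ by the ``local profile'' of an induced $F$-matching restricted to $V(H)$: which vertices of $H$ are covered, by which $F$-copies inside $H$, and --- if $r$ is covered --- which vertex of $F$ plays the role of $r$. For each profile, what remains is an induced $F$-matching of $G' - r$ respecting a prescribed forbidding pattern on $N_{G'}(u)$. Thus $s'(F,G) = \sum_i \alpha_i(H)\, \beta_i(G')$, and it suffices to design $H$ so that every coefficient $\alpha_i$ is divisible by $m$. A natural candidate is to take $H$ to consist of a root $r$ together with $k$ identical pendant branches, each containing its own ``free'' copy of $F$ and enough padding to guarantee that no copy of $F$ can straddle $H$ and $G' \setminus \{u\}$. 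The action of $S_k$ permuting the branches then makes each $\alpha_i$ a sum of multinomials, forced to be divisible by $m$ once $k$ is large enough in terms of $|V(F)|$ and $m$.

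The main obstacle is precisely the induced constraint, which distinguishes the problem from Theorem~\ref{Thm: main thm, general}. In the non-induced setting, copies of $F$ in $H$ and in $G'$ do not interact at all and the decoupling $s(F,G) = \sum_i \alpha_i(H)\beta_i(G')$ is immediate. Here, edges inside $H$ can invalidate an $F$-copy in $G'$ that covers $u$, and in principle a copy of $F$ might straddle the boundary between $H$ and $G'$. The padding inside each branch of $H$ is engineered to eliminate the straddling possibility, and the additional boundary information is absorbed into the profile. Verifying $m \mid \alpha_i$ uniformly across all profiles --- in particular handling nontrivial automorphisms of $F$, which subtly affect the multiplicities in the branch counts --- is the technically hardest piece of the argument.
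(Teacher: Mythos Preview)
Your two–step architecture (probabilistic leaf lemma plus deterministic nullifying gadget) is exactly the paper's, and your probabilistic step via Pr\"ufer sequences is a fine substitute for the paper's Joyal argument. The gap is in the deterministic step: the $S_k$-symmetry/multinomial mechanism you propose does not force $m\mid\alpha_i$.

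Concretely, take the boundary profile ``$r$ is uncovered'' (or, symmetrically, ``$r$ is covered by a copy lying entirely in $G'$, so all $H$-neighbours of $r$ are forbidden''). With $k$ identical branches attached at $r$, the $H$-side count for this profile is simply the product over branches, i.e.\ $\alpha = b^k$ where $b$ is the number of induced $F$-matchings of a single branch (respectively, those avoiding the attachment vertex). The $S_k$-action permutes the factors but the total is still a pure $k$th power, not a sum of multinomials with a common divisor. If $m$ is a prime not dividing $b$, no choice of $k$ makes $b^k\equiv 0\pmod m$; and ``free'' copies of $F$ in each branch only contribute extra factors of $2$, which helps for $m$ a power of $2$ but not in general. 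So the step ``forced to be divisible by $m$ once $k$ is large enough'' is false as stated, and the whole burden is pushed back to designing the branch itself so that its induced $F$-matching count already vanishes modulo $m$ --- which is precisely the nontrivial ingredient you have not supplied.

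The paper handles this differently. It first constructs, for any $F$ and $m$, a single tree $Y'$ with $s'(F,Y')\equiv 0\pmod m$ by building a one-parameter family $Y'_r$ (obtained by chaining copies of $F$ along a diametral path with one extra edge between consecutive copies) whose counts $g'(r)=s'(F,Y'_r)$ satisfy a linear recurrence $g'(r)=g'(r-1)+g'(r-d-1)$ with last coefficient $1$; periodicity modulo $m$ plus the ability to run the recurrence backwards to hit a zero initial value then guarantees some $g'(r_0)\equiv 0$. The nullifying gadget $Z'$ is then $\Delta(F)+2$ copies of $Y'$: one attached to the root at distance $1$ and the remaining $\Delta(F)+1$ at distance $2$. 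The point of the two distances is the induced constraint you correctly flagged: if the root is uncovered, the distance-$1$ copy of $Y'$ survives intact; if the root is covered, its neighbours must be uncovered, so at least one of the distance-$2$ copies of $Y'$ survives intact. Either way a full $Y'$ remains as a connected component of the residual forest, contributing a factor $s'(F,Y')\equiv 0\pmod m$. Your proposal would be repaired by replacing the $S_k$/multinomial idea with this recurrence-based construction of the branch.
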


Wagner's result is an immediate consequence of Theorem \ref{Thm:
main thm, induced} --- simply take $F$ to be a single vertex.

In the next section we prove Theorem \ref{Thm: main thm, general},
in Section \ref{sec: induced case} we describe a similar proof of
the induced case and in the last section we state some extensions
and conclude with a few remarks and open questions. Our extensions
include the fact that the assertions of both theorems hold when the
random tree $\mathcal{T}_n$ is replaced by a random planar graph on
$n$ vertices.

\section{The non-induced case}\label{sec: general case}
In this section we prove Theorem \ref{Thm: main thm, general}. The
proof is probabilistic and has two parts, a probabilistic claim
(Lemma \ref{Lem: aas has R cong T_n^(u,v)}) and a deterministic
claim (Lemma \ref{Lem: Nullifying tree, non induced}). Theorem
\ref{Thm: main thm, general} is an immediate consequence of these
claims.

We shall use the following notation. Let $T$ be a tree and assume
that $\edge{u}{v}$ is an edge in $T$. We define a rooted tree
$T^{(u,v)}$ by first setting $v$ as the root --- this defines a
direction of parenthood in $T$
--- and then removing $u$ along with its descendants.
Note that $T^{(u,v)}$ is a rooted (undirected) tree. If $R$ is a
rooted tree isomorphic to $T^{(u,v)}$ (a fact we denote by $R \cong
T^{(u,v)}$) for some edge $\edge{u}{v} \in T$, we say that \emph{$T$
has an $R$-leaf}. The next Lemma states that for every fixed rooted
tree $R$, a random tree has an $R$-leaf with probability
exponentially close to 1.

\begin{lemma} \label{Lem: aas has R cong T_n^(u,v)}
Let $R$ be a rooted tree. There exists a constant $c = c(R) > 0$
such that
\[ \Pr [\exists \edge{u}{v} \in \mathcal{T}_n
\text{ s.t.\ } R \cong \mathcal{T}_n^{(u,v)}] > 1-e^{-cn} . \]
\end{lemma}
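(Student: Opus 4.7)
The plan is to combine a first-moment computation via Cayley's formula with an exponential concentration step. Let $k=|V(R)|$ and let $X=X(\mathcal{T}_n)$ count the $R$-leaves of $\mathcal{T}_n$. For each $k$-subset $S\subseteq[n]$ and each rooted-tree labeling $\phi\colon V(R)\to S$, write $A_{S,\phi}$ for the event that $\phi(R)$ occurs in $\mathcal{T}_n$ as an $R$-leaf (attached via $\phi(\rho)$, where $\rho$ is the root of $R$). The key identity is a contraction bijection: collapsing the pendant copy of $R$ to its root is a bijection between trees on $[n]$ realizing $A_{S,\phi}$ and trees on the $(n-k+1)$-element set $([n]\setminus S)\cup\{\phi(\rho)\}$ in which the contracted vertex is a leaf, so $\Pr[A_{S,\phi}]=(n-k)^{n-k-1}/n^{n-2}$ by Cayley's formula. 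Summing over the $n!/((n-k)!\,|\mathrm{Aut}(R)|)$ admissible $(S,\phi)$ pairs gives $\mathbb{E}[X]=c(R)\,n\,(1+o(1))$ for a positive constant $c(R)$.

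To upgrade linear expectation into the exponential bound $\Pr[X=0]\le e^{-cn}$, I would exhibit $\Omega(n)$ approximately independent $R$-leaf trials. The cleanest route is to partition $[n]$ into $\lfloor n/M\rfloor$ blocks of size $M=M(R)$, condition on the coarse skeleton of $\mathcal{T}_n$ (the tree obtained by contracting each block, together with the record of which vertex inside each block receives each boundary edge), and observe that this conditioning leaves the restriction of $\mathcal{T}_n$ to each block an independent uniform labeled tree with prescribed boundary attachments. In each such block there is then a constant probability $p(R)>0$ that an $R$-leaf of the whole tree appears inside it (use the one boundary vertex of the block as the attachment $u$ and require the rest of the block to form $R$ pendant off it). A Chernoff bound over these $\Omega(n)$ independent trials then yields $\Pr[X=0]\le(1-p)^{\Omega(n)}=e^{-\Omega(n)}$.

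The main obstacle is the concentration step. One must verify that conditioning on the coarse skeleton indeed gives independent uniform trees inside the blocks; this is not immediate from the Prüfer description, since a single Prüfer coordinate change can cascade through the decoding algorithm and alter many edges, so naive Lipschitz arguments (Azuma on the Prüfer sequence) fail. If the conditional-independence factorization proves awkward, an alternative is a Janson-type inequality applied to the indicator events $\{A_{S,\phi}\}$: one bounds the dependency sum $\Delta=\sum_{S\cap S'\ne\emptyset}\Pr[A_{S,\phi}\cap A_{S',\phi'}]$ via a double-contraction Cayley count (two disjoint pendant copies reduce to trees on $n-2k+2$ vertices with two prescribed leaves, and overlapping pairs contribute only $O(n)$ after case analysis) to obtain $\Delta=O(n)$, yielding $\Pr[X=0]\le\exp(-\mathbb{E}[X]^2/(2(\mathbb{E}[X]+\Delta)))=e^{-\Omega(n)}$.
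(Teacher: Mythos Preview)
Your first-moment computation via contraction and Cayley's formula is correct and gives $\mathbb{E}[X]\sim c(R)\,n$. The gap is in the concentration step, and neither of your two proposed routes closes it as written. For the block decomposition, the claimed factorization does not hold: the restriction of a spanning tree to a block is in general a forest, not a tree, and the ``coarse skeleton'' obtained by contracting the $\lfloor n/M\rfloor$ blocks is a connected multigraph with all $n-1$ edges still present, not a tree on $\lfloor n/M\rfloor$ vertices; even after recording all boundary-edge endpoints, the conditional law of the internal forests does not decouple into independent uniform labeled trees on the blocks. For the Janson route, the bound $\Pr[X=0]\le\exp\bigl(-\mathbb{E}[X]^2/(2(\mathbb{E}[X]+\Delta))\bigr)$ requires the events $A_{S,\phi}$ to be up-sets in a product probability space with independent coordinates; the uniform measure on labeled trees carries no such product structure, and the $R$-leaf events are not monotone in any natural coordinate system, so the inequality simply does not apply. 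Your double-contraction count does yield $\mathrm{Var}(X)=O(n)$ and hence $\Pr[X=0]=O(1/n)$ via Chebyshev, but that falls well short of exponential.

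The paper resolves exactly the obstruction you flagged with Pr\"ufer codes by changing the encoding rather than the inequality. The Joyal bijection sends a uniform function $f\colon[n]\to[n]$ to a uniform doubly-marked labeled tree, and a uniform random function \emph{is} a product measure: the values $f(1),\dots,f(n)$ are i.i.d.\ uniform on $[n]$. One counts, in the functional digraph $\vec G_f$, directed copies of $\vec R$ (with one extra outgoing edge at the root) lying entirely outside all cycles; every such copy survives the bijection verbatim as an $R$-leaf of $T_f$. Changing a single value $f(i)$ can affect at most two such copies --- the one using the old edge $(i,f(i))$ and the one that acquires a stray incoming edge at the new target $f'(i)$ --- so the count is $2$-Lipschitz in these independent coordinates, and Azuma's inequality gives $\Pr[X=0]\le e^{-cn}$ directly. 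In short, the missing idea is not a sharper concentration inequality but a product-space encoding of random trees in which the $R$-leaf count is Lipschitz; Joyal provides exactly that.
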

\begin{proof}
While our object of interest are trees, it is easier to work with
functions on $[n] = \{1,2,\dots,n\}$ via the Joyal mapping
(\cite{A:Joyal1081}, also presented in English in
\cite{B:Aigner_&_Ziegler2003}).

We shall briefly describe the Joyal mapping and some of its
properties that we need. The Joyal mapping maps $f$, a function from
$[n]$ to itself, to an undirected tree $T_f$ over the set of
vertices $[n]$. There are $n^n$ functions in $[n]^{[n]}$, but only
$n^{n-2}$ labeled trees over $[n]$. In order to make the mapping
into a bijection we distinguish two vertices of a labeled tree by
marking them \emph{left} and \emph{right} (we may mark one vertex
with both). Now the target set is the set of all labeled trees over
$[n]$ together with the markings, and is of size $n^n$.

The mapping is defined as follows. Let $f\colon[n]\to[n]$. Define
$\vec{G}_f$ as the functional digraph\footnote{A \emph{functional
digraph} is a directed graph with all outdegrees equal one.} with
vertex set $[n]$ and edge set $\{\dedge{i}{f(i)} | i\in[n] \}$.
Every vertex in $\vec{G}_f$ has outdegree one, so every connected
component has one directed cycle, and all edges that are not in a
cycle are pointing towards the cycle. Let $M=\{a_1 < a_2 < \dotsb <
a_m\}$ be the set of all vertices participating in a cycle of
$\vec{G}_f$. Notice that $M$ is the maximal set such that $f|_M$ is
a bijection. To get $T_f$, the tree corresponding to the function
$f$, we first define a path by taking the vertices of $M$ and adding
the $m-1$ edges of the form $\edge{f(a_i)}{f(a_{i+1})}$. We then
mark $f(a_1)$ as ``left'' and $f(a_m)$ as ``right''. Finally we add
the vertices in $[n]\setminus M$ with the edges $\edge{i}{f(i)}$
from $\vec{G}_f$ (forgetting about directions).

Given a tree $T$ with two such markings, we go back by defining
$M$ as the vertices in the path $P$ connecting ``left'' and
``right'', and directing all other vertices towards $P$. Sort the
members of $M$ according to their value and denote them by $a_1 <
a_2 < \dotsb < a_m$. We define $f$ as follows. If $i\in M$ is the
$j$'th vertex in the path then $f(i) = a_j$. If $i\notin M$ then there is one edge, $\dedge{i}{j}$,
emanating from $i$, and we set $f(i) = j$. It is easy to verify that
this is indeed the inverse of the mapping described above.

Notice that vertices that are not in a cycle are left by the Joyal
mapping as they were in $\vec{G}_f$, meaning that they will be
incident with exactly the same edges as in the functional graph. In
particular, edges with both endpoints being vertices that are not in
a cycle of $\vec{G}_f$ will touch the same edges in $T_f$ as in
$\vec{G}_f$. For our purpose, the fate of vertices lying in a cycle
is irrelevant.

Direct the edges of $R$ towards the root to get $\vec{R}$. Consider a random function
$f$ on $[n]$ and let $X$ be the random variable counting the number
of directed edges $\dedge{u}{v}$ in $\vec{G}_f$ such that\ $u,v$ and
the ancestors of $v$ in $\vec{G}_f$ do not belong to any cycle in
$\vec{G}_f$, and in addition, $v$ and its ancestors form an
isomorphic copy of $\vec{R}$.

Denote the vertices of $\vec{R}$ by $r_1,\dots,r_k$, the root being
$r_k$. Fix a $(k+1)$-tuple of vertices of $\vec{G}_f$, say
$1,2,\dots,k+1$. The probability that the edge
$\dedge{k}{k+1}$ meets the condition described above is at least the
probability that $\dedge{k}{k+1} \in E(\vec{G}_f)$, the mapping $i
\rightarrow r_i$ is an isomorphism between $\vec{R}$ and
$\vec{G}_f[\{1,\dots,k\}]$, and in addition, there are no other
edges of $\vec{G}_f$ incoming to $\{1,\dots,k+1\}$.  The latter is
\[ \left( \frac{1}{n} \right)^k \left( \frac{n-k-1}{n} \right)^{n-k}
. \] %
In order to see this simply notice that for $1 \leq i \leq k$
there is only one valid target for $f(i)$, while for $i \geq k+1$
it is enough to require that $f$ will map $i$ outside of
$\{1,2,\dots,k+1\}$. Therefore we get
\[ \E X \geq \binom{n}{k+1} n^{-k} \left( \frac{n-k-1}{n} \right)^{n-k}, \]
which implies $\E X = \Omega(n)$.

We want to show that $X$ is concentrated around its mean. Consider
the \emph{value exposing} martingale, in which we expose the values
of $f$ one by one. Now, changing the value of $f$ in one coordinate,
$i$, can ruin at most two copies of $\vec{R}$ (one using the edge
$\dedge{i}{f(i)}$ and another that now has an extra edge
$\dedge{i}{f'(i)}$). Therefore the Lipschitz condition with constant
two holds and we can apply the Azuma Inequality
\cite{B:alon_&_spencer08, A:azuma1967} which yields $\Pr[X = 0] <
e^{-cn} $ for some constant $c>0$.

Observe that if $X(f)>0$ then by the definition of $X$, the
corresponding tree $T_f$ contains the edge $\edge{u}{v}$ requested
by the proposition.

As mentioned above, the Joyal correspondence is $n^2$ to one. If a
labeled tree $T$ does not contain an edge as required, then all its
$n^2$ preimages $f$ satisfy $X(f) = 0$. Therefore, the probability
not to get a tree with a required edge is at most $\Pr[X=0] <
e^{-cn}$ as proven above.
\end{proof}

The next argument of the proof states the existence of a
\emph{nullifying tree} $Z$ (depending on $F$ and $m$) such that if a
tree $T$ has a $Z$-leaf then $s(F,T) \equiv 0 \imod{m}$.
\begin{lemma} \label{Lem: Nullifying tree, non induced}
Let $F$ be a tree with at least one edge and let $m$ be an integer.
Then there exists a rooted tree $Z$ such that, if $Z \cong
T^{(u,v)}$ for some edge $\edge{u}{v} \in T$, then $s(F,T) \equiv 0
\imod{m}$.
\end{lemma}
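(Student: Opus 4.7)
I would set $T' = T \setminus (V(Z)\setminus\{u\})$, so that $T$ consists of the trees $T'$ (containing $u$) and $Z$ (rooted at $v$) joined by the edge $\{u,v\}$. Every $F$-matching of $T$ is classified by the status of $v$: either $v$ is uncovered, or $v$ lies in an $F$-copy entirely inside $Z$, or $v$ lies in an $F$-copy that uses the edge $\{u,v\}$. In the last case, that copy maps $\{u,v\}$ to some oriented edge $(x,y)$ of $F$ with $v\leftrightarrow y$, and the portion of the copy inside $Z$ is a rooted subtree $\tau = F_y^{\{x,y\}}$ --- a ``partial shape'' at $v$. Writing $A_0(Z)$, $A_1(Z)$, $B_\tau(Z)$ for the number of matchings of $Z$ in each case, and $\alpha_0(T'), \alpha_1(T'), \alpha_\tau(T')$ for the analogous counts at $u$ in $T'$, matching up crossing copies yields
\[
s(F,T) = \bigl(\alpha_0(T')+\alpha_1(T')\bigr)\,s(F,Z) + \sum_\tau \alpha_\tau(T')\,B_\tau(Z),
\]
using $A_0(Z)+A_1(Z)=s(F,Z)$ (partial copies do not occur in a standalone $Z$). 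There are only finitely many partial shapes $\tau$ (at most one per oriented edge of $F$). Hence it suffices to exhibit $Z$ with $s(F,Z)\equiv 0\pmod m$ and $B_\tau(Z)\equiv 0\pmod m$ for every $\tau$.

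\textbf{Construction.} Let $k$ be any integer exceeding the maximum degree of $F$, and let $W$ be a rooted tree satisfying $s(F,W)\equiv 0\pmod m$ (existence discussed below). Define $Z$ to be the rooted tree whose root $v$ has $k$ children, each being the root of a separate copy of $W$. Then $Z-v$ is the disjoint union of $k$ copies of $W$, so
\[
A_0(Z) = s(F,Z-v) = s(F,W)^k \equiv 0 \pmod m.
\]
For $A_1(Z)$ and each $B_\tau(Z)$, fix an embedding of the corresponding (full or partial) $F$-copy with root at $v$. Such an embedding occupies $v$ together with at most $\Delta(F) < k$ of $v$'s children in $Z$; hence at least one of the $k$ copies of $W$ is disjoint from the embedded copy. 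The number of matchings of $Z$ extending this embedded copy factors as a product $\prod_j s(F, C_j)$ over the components $C_j$ of $Z$ minus the embedded copy, and the disjoint $W$-copy contributes a factor $s(F,W)\equiv 0\pmod m$ to this product. Summing over all embeddings keeps every summand divisible by $m$, so $A_1(Z)\equiv 0$ and $B_\tau(Z)\equiv 0 \pmod m$.

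\textbf{Existence of $W$.} The remaining step is to produce a rooted tree $W$ with $s(F,W)\equiv 0\pmod m$. I would consider a family $W_n$ built by concatenating $n$ copies of a fixed small tree containing $F$ along a path. A standard transfer-matrix argument shows that $s(F,W_n)$ satisfies a linear recurrence in $n$, so the sequence $s(F,W_n)\bmod m$ is eventually periodic. By examining the period (and, if needed, reducing to prime-power moduli via the Chinese Remainder Theorem, or joining several $W_n$'s at a common new root so that their matching counts multiply), one finds a choice with $s(F,W_n)\equiv 0\pmod m$.

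\textbf{Main obstacle.} The decomposition and the $k > \Delta(F)$ construction are clean once $W$ is available; the real difficulty is producing $W$. For $F=K_2$ this follows immediately from the Pisano periodicity of Fibonacci numbers (matching counts of paths). For general $F$ one must select a tractable tree family and verify that the residue zero is actually attained in the eventual period of $s(F,W_n)\bmod m$; the multiplicative flexibility afforded by joining rooted trees at a new root, combined with CRT, should suffice for the general case.
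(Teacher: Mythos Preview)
Your decomposition and the construction of $Z$ from $k>\Delta(F)$ copies of $W$ are essentially the paper's argument (the paper takes $k=\Delta(F)+1$ and classifies matchings by the copy of $F$ covering $v$, which implicitly handles your crossing shapes $\tau$). The substantive step, as you yourself flag, is the existence of a tree $W$ with $s(F,W)\equiv 0\pmod m$, and here your proposal has a genuine gap.

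Eventual periodicity of $s(F,W_n)\bmod m$ from a generic transfer-matrix argument does not force the value $0$ to appear in the period, and neither of your suggested fixes closes this: CRT and the join-at-a-root multiplicativity both reduce the problem to finding, for each prime power $p^a\mid m$, some tree with $s(F,\cdot)\equiv 0\pmod{p^a}$, which is the same obstruction one level down. The paper's resolution is to choose the family $Y_r$ carefully so that the recurrence has the specific shape $g(r)=g(r-1)+g(r-d)$. Two features matter. First, the trailing coefficient is $1$, so the recurrence is \emph{invertible} over $\mathbb{Z}$ via $g(r-d)=g(r)-g(r-1)$; hence the sequence $g(r)\bmod m$ is purely periodic, not merely eventually periodic. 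Second, the initial values are such that extending backward gives $g(-1)=0$. Pure periodicity then guarantees $g(r_0)\equiv 0\pmod m$ for some positive $r_0$. Your sketch supplies neither ingredient (an invertible recurrence, or a zero in the backward-extended initial segment), and without them the argument does not go through.
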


\begin{proof}
The proof is constructive. By Proposition \ref{Propo: Exists Y with
s(F,Y) equiv 0 mod m} to be proven below
there exists a tree $Y$ such that $s(F,Y)
\equiv 0 \imod{m}$.

Let $\Delta(F)$ be the maximal degree of $F$. To get $Z$ take
$\Delta(F) + 1$ copies of $Y$, add a new vertex $r$ to be viewed as
the root of $Z$, and connect $r$ to a vertex of each $Y$ (thus
adding $\Delta(F) + 1$ edges). 

Let $T$ be a tree and assume that $Z \cong T^{(u,v)}$ for some edge
$\edge{u}{v} \in T$. We wish to show that $s(F,T) \equiv 0
\imod{m}$. There are finitely many ways in which one may cover $v$
by a copy of $F$, and it may also be that $v$ remains uncovered. We
classify $F$-matchings in $T$ into classes $C_1,C_2,\dotsc,C_q$
according to the copy of $F$ covering $v$, with the set of
$F$-matchings not covering $v$ being a separate class $C_0$. We
argue that the number of $F$-matchings in each such class is a zero
residue. Indeed, the number of $F$-matchings in a given class $C_i$
is precisely the number of $F$-matchings in the forest remaining
from $T$ after removing $v$ and the copy covering it, if there is
one. In fact, this number is the product of the number of
$F$-matchings in every connected component of the forest. By our
construction of $Z$, at least one of the trees is this forest is
isomorphic to $Y$. Since $s(F,Y) \equiv 0 \imod{m}$ we deduce that
the number of $F$-matchings in the forest, and also in $C_i$, is
zero modulo $m$. This is true for all $C_i$, and since $S(F,T)
= \cup C_i$ one has $s(F,T) \equiv 0 \imod{m}$.
\end{proof}

Before stating and proving the next proposition we define some
notation. Let $F$ be a tree. Take a longest path in $F$ and denote its vertices by $u_1,u_2,\dots,u_{l+1}$, where $l$ is the diameter of $F$. If we disconnect all edges of the form
$\edge{u_i}{u_{i+1}}$ we get $l+1$ subtrees. Let $b_i$ be the number
of vertices in the subtree containing $u_i$. With this notation we
have $|F| = \sum_{i=1}^{l+1} b_i$. Since $b_{l+1} =1$ we may also
write $|F| = 1 + \sum_{i=1}^l b_i$. We shall use this notation in
the proof of the next proposition and in the proof of Proposition
\ref{Propo: Exists Y' with s'(F,Y') equiv 0 mod m} as well.

\begin{proposition} \label{Propo: Exists Y with s(F,Y) equiv 0 mod
m} Let $F$ be a tree with at least one edge and let $m$ be an
integer. Then there exists a rooted tree $Y$ such that $s(F,Y)
\equiv 0 \imod{m}$.
\end{proposition}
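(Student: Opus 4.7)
The plan is to choose $Y$ from an explicit family $(Y_N)_{N\ge 1}$ of rooted trees so that $s(F,Y_N)\pmod m$ is governed by a linear recurrence and attains the value $0$ for some $N$. The construction depends mildly on whether $F$ is a path.

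Suppose first that $F$ is not a path. Using the notation introduced immediately before the proposition, let $u_1,u_2,\ldots,u_{l+1}$ be a longest path of $F$ with hanging subtrees $B_1,\ldots,B_{l+1}$; recall $b_1=b_{l+1}=1$. For $N\ge 1$, I would define the ``$F$-caterpillar'' $Y_N$ to be a spine $w_1,w_2,\ldots,w_{lN+1}$ together with an attached rooted copy of $B_j$ at each $w_i$, where $j=((i-1)\bmod l)+1$ and $u_j$ is identified with $w_i$. Then for each $k=0,1,\ldots,N-1$, the $l+1$ consecutive spine vertices starting at $w_{lk+1}$ together with their attached subtrees form a canonical copy $F^{(k)}$ of $F$; moreover, consecutive canonical copies $F^{(k)},F^{(k+1)}$ share exactly the spine vertex $w_{l(k+1)+1}$, which is a leaf of both. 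Granting that these are the \emph{only} copies of $F$ in $Y_N$, the conflict graph of copies is a path on $N$ vertices, so $s(F,Y_N)$ equals the number of independent sets in $P_N$, namely the Fibonacci number $F_{N+2}$. Since the Fibonacci sequence is periodic modulo $m$ with $F_0=0$ in its period, I would choose $N$ so that $N+2$ is a positive multiple of the Pisano period of $m$, which gives $s(F,Y_N)\equiv 0\pmod m$.

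When $F=P_l$ is itself a path, the caterpillar above degenerates because every spine shift hosts a copy, so I would instead take $Y_N=P_N$ directly. The sequence $a_N:=s(P_l,P_N)$ satisfies the recurrence $a_N=a_{N-1}+a_{N-l}$ with $a_0=a_1=\cdots=a_{l-1}=1$, and its associated transfer matrix has determinant $\pm 1$. Hence $(a_N\bmod m)$ is \emph{purely} periodic and extends backward; solving the recurrence at index $-1$ yields $a_{-1}=a_{l-1}-a_{l-2}=0$, and pure periodicity then forces $a_N\equiv 0\pmod m$ at some positive $N$, supplying the desired $Y$.

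The main obstacle will be verifying the rigidity assumption in the non-path case, i.e., that the $N$ canonical copies are the only copies of $F$ in $Y_N$. The argument should consider any hypothetical alternative embedding: its longest path of length $l$ must sit inside $Y_N$, and depending on whether it shifts along the spine or dips into attached subtrees, the required pattern of $B_i$-shapes at its vertices would disagree with what $Y_N$ actually provides. The non-path hypothesis ensures at least one of $B_2,\ldots,B_l$ is nontrivial, providing enough distinguishability; a careful case analysis on the possible shapes of the alternative embedding closes the verification, and the Fibonacci conclusion then yields $Y=Y_N$ with $s(F,Y)\equiv 0\pmod m$.
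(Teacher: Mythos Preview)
Your path case is correct and is, in fact, the paper's general strategy. The genuine gap is in your non-path case: the rigidity claim---that the $N$ canonical copies are the only copies of $F$ in $Y_N$---is false in general, and no case analysis will rescue it.

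Take $F$ to be the path $u_1u_2u_3u_4$ with a pendant leaf $v$ attached at $u_2$; here $l=3$, $b_2=2$, and $b_1=b_3=b_4=1$, so $F$ is not a path and exactly one $B_j$ is nontrivial. Your $Y_N$ is then the spine $w_1,\ldots,w_{3N+1}$ with a pendant $\ell_i$ at each $w_i$ with $i\equiv 2\pmod 3$. Besides the canonical copies $\{w_{3k+1},w_{3k+2},w_{3k+3},w_{3k+4},\ell_{3k+2}\}$, each set $\{w_{3k},w_{3k+1},w_{3k+2},w_{3k+3},\ell_{3k+2}\}$ for $1\le k\le N-1$ is also a copy of $F$ (via $u_1\mapsto w_{3k+3}$, $u_2\mapsto w_{3k+2}$, $u_3\mapsto w_{3k+1}$, $u_4\mapsto w_{3k}$, $v\mapsto\ell_{3k+2}$). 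Already for $N=2$ there are three copies, all pairwise intersecting, so $s(F,Y_2)=4\neq F_4=3$. The problem is structural: whenever the sequence $(b_1,\ldots,b_{l+1})$ is not a palindrome, reversing it gives a shifted placement of $F$ along the spine that your construction cannot exclude.

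The paper builds essentially the same caterpillar but never claims rigidity. It indexes subtrees by the number $r$ of \emph{starting vertices} (leftmost spine vertices of some copy of $F$) they contain, and shows by a size count that any copy of $F$ with leftmost spine vertex $i$ must cover the spine exactly from $i$ through $i+l$. This yields $g(r)=g(r-1)+g(r-d)$, where $d$ is the number of starting vertices in one period of the caterpillar ($d=3$ in the example above, giving $g(1)=g(2)=1$, $g(3)=2$, $g(4)=3$, $g(5)=4$, matching $s(F,Y_2)=4$). Extending backward gives $g(-1)=0$, and pure periodicity modulo $m$ finishes---precisely your Case~2 argument with $l$ replaced by $d$. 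So the fix is not to split into cases at all, but to run your path argument uniformly, allowing $d\ge 2$ to absorb whatever extra copies arise.
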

\begin{proof}
Let $W_t$ be a tree made of $t$ copies of $F$ in which we
identify the vertex $u_{l+1}$ of copy $i$ with the vertex $u_1$ of
copy $i+1$ (for $1 \leq i \leq t-1$). Let $P \subset W_t$ be the
path in $W_t$ connecting the first copy of $u_1$ to the last copy of
$u_{l+1}$, and number its vertices by $1,\dotsc,lt+1$ in the natural
order, from the copy of $u_1$ in the first copy of $F$ to the copy
of $u_{l+1}$ in the last copy of $F$. We want to have a direction of
parenthood in $W_t$, so we set $1$ to be the root. Notice that all
connected components of $W_t \setminus V[P]$ are of size strictly
less than $|F|$.

We are interested in embeddings of $F$ in $W_t$, that is, in
subgraphs of $W_t$ that are isomorphic to $F$. Notice that every
such embedding must have a vertex in $P$. Let $C$ be an embedding of
$F$ in $W_t$. We call the vertex $\min\{C \cap P\}$ the
\emph{starting vertex} of $C$. Consider the set of all starting
vertices in $W_t$. If $1 \leq i \leq (t-2)l+1$ is a starting vertex,
then by symmetry so is $i+l$. Observe that trivially $1$ is a
starting vertex (and so are $l+1, 2l+1,\dotsc$). By the symmetry
argument above, if there are $d$ starting vertices between $1$ and
$l+1$ (inclusive), then there are $1+(t-1)(d-1)$ starting vertices
in $W_t$. To see this recall that $1$ is always a starting vertex,
and each copy but the last adds $d-1$ starting vertices; also, the
last copy of $F$ in $W_t$ does not contain any starting vertices
apart from $1+l(t-1)$ as deleting $1+l(t-1)$ leaves less than $|F|$
vertices to the right of it. Similarly, if $i$ is a starting vertex
then there are $d$ starting vertices between $i$ and $i+l$,
inclusive.

Now we can define $\{Y_r\}$, a family of subtrees of $W_t$ a member
of which will eventually be the sought after tree. Set $t$ to be
large enough ($t = 1+\lceil(r-1)/(d-1)\rceil$ will do). To get $Y_r$
take the minimal subpath of $P\subset W_t$ containing the last $r$
starting vertices and then append to each vertex in the subpath the
subtree of its descendants through children outside $P$. For
example, $Y_1$ is the single starting vertex $1+l(t-1)$ and $Y_d$ is
the next to the last copy of $F$ in $W_t$.

Let $g(r)$ be the number of $F$-matchings in $Y_r$. We count such
$F$-matchings by the membership of $i$, the first vertex in $Y_r$.
If $i$ is not covered by the matching, then the next embedding of
$F$ begins no earlier than the next starting vertex. This means
that the number of $F$-matchings of $Y_r$ in which $i$ is not
covered is $g(r-1)$.

We argue now that if $i$ is covered by the matching then the next
$d-1$ starting vertices are also covered. Let $\varphi\colon F \to
Y_r$ be an embedding covering $i$. We claim that the next $d-1$
starting vertices are also covered by $\varphi$. First, since the
diameter of $F$ is $l$, no vertex of $P$ farther than $i+l$ (which
is the starting vertex $d-1$ away from $i$) is covered by $\varphi$.
On the other hand, the path from $i$ to $i+l-1$ contains one copy of
each $u_i$ (not necessarily in the natural order). Thus, the number
of vertices in the set containing $i, i+1, \dotsc, i+l-1$ and their
descendants is exactly $\sum_{i=1}^l b_i$, hence $\varphi$ extends
also to $i+l$. Therefore, the other embeddings in the $F$-matching
need to start after $i+l$. We get that the number of such matchings
is exactly $g(r-d)$. This gives the recursion $g(r) = g(r-1) +
g(r-d)$.

Observe that the tree $Y_r$, $1\le r<d$, does not contain a copy of
$F$, and thus the only $F$-matching in $Y_r$ is the empty one,
implying $g(r) = 1$ for every $1 \leq r < d$; also, $g(d) = 2$ as
$Y_d = F$. We can extend the recursion backwards by defining $g(0) =
1$ and $g(-1) = 0$. By Claim \ref{claim: rec} below there is an
integer $r_0 > 0$ such that $g(r_0) \equiv 0 \imod{m}$. Define $Y =
Y_{r_0}$. By the definition of $g(r)$ we have $s(F,Y) \equiv 0
\imod{m}$.
\end{proof}

\begin{claim} \label{claim: rec}
Let $g(r)\colon\mathbb{N}\to\mathbb{Z}$ be a sequence of integers
obeying a recurrence relation with integer coefficients $g(r) =
\sum_{i=1}^d c_i g(r-i)$. Assume that $g(0) = 0$ and $c_d = 1$.
Then for every positive integer $m>0$ there exists an index $r_0 =
r_0(m)>0$ such that $g(r_0) \equiv 0 \imod{m}$.
\end{claim}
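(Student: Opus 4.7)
The plan is to reduce everything modulo $m$ and exploit the fact that the recurrence, being of order $d$ with leading coefficient $c_d = 1$, defines an \emph{invertible} dynamical system on a finite state space. First I would encode the sequence by the state vectors
\[ s_r = (g(r), g(r+1), \dots, g(r+d-1)) \in (\mathbb{Z}/m\mathbb{Z})^d, \]
so that the recurrence determines a well-defined map $T\colon s_r \mapsto s_{r+1}$ on the finite set $(\mathbb{Z}/m\mathbb{Z})^d$ of $m^d$ elements.

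The key observation is that $T$ is a bijection modulo $m$. Rewriting the recurrence as
\[ g(r-d) = g(r) - \sum_{i=1}^{d-1} c_i \, g(r-i), \]
which is valid precisely because $c_d = 1$, shows that $d$ consecutive values also determine the immediately preceding one, so $T$ admits a two-sided inverse on $(\mathbb{Z}/m\mathbb{Z})^d$. A bijection on a finite set is a permutation, hence every forward orbit of $T$ is purely periodic with no transient prefix.

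Applying this to $s_0 = (g(0), g(1), \dots, g(d-1)) = (0, g(1), \dots, g(d-1))$, periodicity produces some least $r_0 \ge 1$ with $s_{r_0} \equiv s_0 \imod{m}$, and comparing the first coordinates yields $g(r_0) \equiv g(0) = 0 \imod{m}$, as required. The only delicate point is the invertibility of $T$: without the hypothesis $c_d = 1$ the iteration would be merely \emph{eventually} periodic on $(\mathbb{Z}/m\mathbb{Z})^d$, and the initial zero at index $0$ could sit in the transient part of the orbit and never recur. Thus the condition $c_d = 1$ is exactly what makes the argument go through.
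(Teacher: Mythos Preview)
Your proof is correct and follows essentially the same approach as the paper: both arguments reduce modulo $m$, observe that the state vectors live in the finite set $(\mathbb{Z}/m\mathbb{Z})^d$, and use the hypothesis $c_d=1$ to invert the recurrence and conclude that the orbit is purely periodic (the paper phrases this as ``extending the sequence $m^d$ steps backwards'' rather than calling $T$ a bijection, but the content is identical). Your remark that without $c_d=1$ the initial zero could lie in a transient is exactly the point.
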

\begin{proof}
First we claim that $g(r) \imod{m}$ is periodic. Indeed, since
$g(r) \imod{m}$ is determined by the $d$-tuple of the previous $d$
values, and since modulo $m$ there are at most $m^d$ possible
$d$-tuples, then after at most $m^d$ steps the sequence $g(r)
\imod{m}$ must become periodic. Next we claim that $g(r) \imod{m}$
is periodic from the beginning. To see this simply extend the
sequence $m^d$ steps backwards using the recurrence relation
$g(r-d) = g(r) - \sum_{i=1}^{d-1} c_i g(r-i)$. The previous argument
shows that the extended sequence is periodic starting at most at
the $m^d$'th element, which is the first element of the original
sequence. Hence $g(r) \imod{m}$ is periodic from its first
element, $g(0)=0$, and thus there is some $r_0 > 0$ such that
$g(r_0) \equiv 0 \imod{m}$.
\end{proof}

\section{The induced case}\label{sec: induced case}
In this section we prove Theorem \ref{Thm: main thm, induced}. The
proof is similar to the proof of Theorem \ref{Thm: main thm,
general} and we shall focus on the differences between the proofs.
As before, the proof is probabilistic. Lemma \ref{Lem: aas has R
cong T_n^(u,v)} is the probabilistic part here as well, but the
deterministic part is replaced by Lemma \ref{Lem: Nullifying tree,
induced} below.

We begin by constructing a nullifying rooted tree from copies of a
tree $Y'$ having $s'(F,Y') \equiv 0 \imod{m}$.
\begin{lemma} \label{Lem: Nullifying tree, induced}
Let $F$ be a tree and let $m$ be an integer. There exists a rooted
tree $Z'$ such that if $Z' \cong T^{(u,v)}$ for some edge
$\edge{u}{v} \in T$, then $s'(F,T) \equiv 0 \imod{m}$.
\end{lemma}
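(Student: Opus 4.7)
The plan is to mirror the proof of Lemma \ref{Lem: Nullifying tree, non induced}, with one extra device to accommodate the induced condition. I would first invoke the induced-matchings analogue of Proposition \ref{Propo: Exists Y with s(F,Y) equiv 0 mod m}, namely Proposition \ref{Propo: Exists Y' with s'(F,Y') equiv 0 mod m} stated later in this section, to obtain a rooted tree $Y'$ with $s'(F,Y') \equiv 0 \imod{m}$. The new difficulty compared with the non-induced case is that when a copy $C$ of $F$ covers $v$, an induced $F$-matching containing $C$ must avoid every vertex of the closed neighbourhood $N_T[V(C)]$, not merely $V(C)$ itself. Hence if one simply attached $\Delta(F)+1$ copies of $Y'$ directly to a new root $r$ as in the non-induced construction, then the root of an untouched copy would be forbidden (as a neighbour of $v \in V(C)$), and the factor one would actually obtain is the number of induced $F$-matchings in $Y'$ that avoid its root, which need not vanish modulo $m$.

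My fix is to insert a buffer vertex. Let $A$ be the rooted tree obtained from $Y'$ by adjoining a new vertex $w$ as a neighbour of the root of $Y'$, with $w$ as the root of $A$. Construct $Z'$ by taking a new root $r$ and joining $r$ by an edge to the root of each of $\Delta(F)+1$ disjoint copies of $Y'$ and, in addition, by an edge to the root $w$ of each of $\Delta(F)+1$ disjoint copies of $A$. Given any tree $T$ with $Z' \cong T^{(u,v)}$, I would split $S'(F,T)$ into the class of matchings in which $v$ is uncovered and, for each copy $C$ of $F$ in $T$ with $v \in V(C)$, the class of matchings containing $C$. These classes have sizes $s'(F,T\setminus\{v\})$ and $s'(F, T\setminus N_T[V(C)])$ respectively. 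Since $s'(F,\cdot)$ is multiplicative over the connected components of a forest, it suffices to exhibit, in each case, a connected component of the relevant forest that is a full copy of $Y'$.

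For the uncovered case this is automatic: each of the $\Delta(F)+1$ bare $Y'$-subtrees attached to $v$ is a component of $T\setminus\{v\}$, and each such component already contributes a factor of $s'(F,Y') \equiv 0 \imod{m}$. For the covered case, since $\deg_C(v) \le \Delta(F)$ and there are $\Delta(F)+1$ $A$-type children of $v$, at least one such child $w_*$ is outside $V(C)$. Then $w_*$ is removed as a neighbour of $v$, but the root of the copy of $Y'$ hanging beneath $w_*$ is neither in $V(C)$ (its only route to $V(C)$ went through $w_*$) nor adjacent to any vertex of $V(C)$, so this entire copy of $Y'$ survives as a full connected component of $T\setminus N_T[V(C)]$ and once more produces the factor $s'(F,Y') \equiv 0 \imod{m}$. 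Summing over the classes gives $s'(F,T) \equiv 0 \imod{m}$. The crux is the diagnosis of the closed-neighbourhood restriction imposed by the induced condition; once the buffer $w$ is interposed, the analysis is a straightforward adaptation of the non-induced proof.
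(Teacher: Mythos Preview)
Your proposal is correct and follows essentially the same approach as the paper's proof: the paper also builds $Z'$ by attaching to a root $r$ one copy of $Y'$ directly and $\Delta(F)+1$ copies of $Y'$ via paths of length two (your ``$A$'' attached to $r$ is exactly a $Y'$ hung at distance two from $r$), and then argues the uncovered and covered cases just as you do. The only difference is that you attach $\Delta(F)+1$ bare copies of $Y'$ where a single one would suffice for the uncovered case, a harmless redundancy.
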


\begin{proof}
By Proposition \ref{Propo: Exists Y' with s'(F,Y') equiv 0 mod m}
below there exists a tree $Y'$ such that $s'(F,Y') \equiv 0
\imod{m}$. Construct $Z'$ by taking $\Delta(F) + 2$ copies of $Y'$,
adding a new vertex $r$ to be viewed as the root of $Z$', connecting
one copy to $r$ with a new edge and connecting the rest of the
$\Delta(F) + 1$ copies to $r$ via a path of length two.

Let $T$ be a tree and assume that $Z' \cong T^{(u,v)}$ for some edge
$\edge{u}{v} \in T$. We need to show that $s'(F,T) \equiv 0
\imod{m}$.

There are finitely many ways in which $v$ may be covered by a copy
of $F$, if it is covered at all. We classify induced $F$-matchings
according to the copy of $F$ covering $v$. Denote these classes by
$C_1,\dotsc,C_k$ and let $C_0$ be the class of all induced
$F$-matchings of $T$ in which $v$ is left uncovered. Clearly
$S'(F,T) = \bigcup_{i=0}^k C_i$. We claim
that $|C_i| \equiv 0 \imod{m}$ for every $0\leq i\leq k$.

Consider first the class $C_0$ of induced $F$-matchings in $T$ that
leave $v$ uncovered. The number of such matchings is the number of
matchings in the forest remaining after deleting $v$.
This forest has a component isomorphic to $Y$ --- the copy of $Y$
that was connected to $v$ by a single edge. The number of
induced $F$-matchings in $C_0$ is then the product of the number of
induced $F$-matchings in every connected component of the
aforementioned forest which is zero modulo $m$.

Consider now the class $C_i$ for $i>0$. As before, there is a
natural one to one correspondence between induced $F$-matchings in
$T$ that belong to $C_i$ and induced $F$-matchings of the forest
remaining after removing the copy of $F$ covering $v$ and all neighbors of vertices in that copy. Since $v$ is covered by the matching, all of its neighbors that are not covered
by the same copy of $F$ must remain uncovered. Otherwise, an
additional edge outside the copies of $F$ would be spanned. This
means that in the above forest at least one of the $\Delta(F)+1$ copies that were
connected to $v$ by a path of length two will now remain as a
connected component. Hence, the number of induced $F$-matchings in
$C_i$ is a zero residue.

Summing the sizes of the $C_i$'s we get that $m'(F,T) \equiv 0 \imod{m}$.
\end{proof}

\begin{proposition} \label{Propo: Exists Y' with s'(F,Y') equiv 0 mod m}
Let $F$ be a tree and let $m$ be an integer. Then there exists a
rooted tree $Y'$ such that $s'(F,Y') \equiv 0 \imod{m}$.
\end{proposition}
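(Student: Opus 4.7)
The plan is to mimic the proof of Proposition~\ref{Propo: Exists Y with s(F,Y) equiv 0 mod m} almost verbatim, modifying only the skip distance in the linear recurrence to account for the added inducedness constraint. Let $u_1,\dots,u_{l+1}$ be a longest path in $F$ with the auxiliary quantities $b_1,\dots,b_{l+1}$ as in the excerpt, and construct $W'_t$ and $\{Y'_r\}$ exactly as $W_t$ and $\{Y_r\}$ in the non-induced proof: take $t$ copies of $F$, identify $u_{l+1}$ of copy $i$ with $u_1$ of copy $i+1$ to form a spine path $P$ of length $lt$, and let $Y'_r$ be the minimal subpath of $P$ containing the last $r$ starting vertices, together with the pendant subtrees hanging off $P$ at those vertices. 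Set $g'(r) := s'(F,Y'_r)$.

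The main task is to derive a linear recurrence for $g'(r)$. Classify the induced $F$-matchings of $Y'_r$ by whether the first starting vertex $i$ of $Y'_r$ is covered. If $i$ is uncovered, the remaining matching lies in $Y'_{r-1}$ and contributes $g'(r-1)$. If $i$ is covered by an embedding $\varphi$, then the geometric argument from the non-induced proof (relying on the diameter of $F$ being $l$ and $\sum b_i = |F|$) forces $\varphi$ to also cover the next $d-1$ starting vertices on the spine; on top of this, inducedness forbids any subsequent embedding from covering any neighbor of $\varphi(F)$ in $W'_t$. The closed neighborhood of $\varphi(F)$ has bounded size (depending only on $F$), and its intersection with $P$ extends no further than spine position $i+l+1$. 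Hence, by the translational symmetry of $W'_t$ along $P$, every later embedding of the matching must begin at the $D$-th starting vertex after $i$, where $D = D(F)$ is a constant determined by $F$ alone, and the remaining induced matching lives in $Y'_{r-D}$, contributing $g'(r-D)$. Summing the two cases yields
\[
  g'(r) \;=\; g'(r-1) + g'(r-D),
\]
a linear recurrence with integer coefficients and leading coefficient $1$.

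As in the non-induced proof, the small base cases $g'(r) = 1$ (for $r$ such that $Y'_r$ is too small to contain a copy of $F$) extend backwards via $g'(r-D) = g'(r) - g'(r-1)$ to an index at which the extended sequence equals zero; Claim~\ref{claim: rec} then produces some $r_0 > 0$ with $g'(r_0) \equiv 0 \imod{m}$. Setting $Y' := Y'_{r_0}$ completes the proof.

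The main obstacle is verifying that $D$ really is a constant depending only on $F$. This reduces to checking that the forbidden zone imposed by inducedness --- the closed neighborhood of $\varphi(F)$ in $W'_t$ --- has the same shape around every sufficiently interior starting vertex $i$, which follows from the periodic structure of $W'_t$ along its spine. If one wishes to avoid a case analysis, a short buffer path may be inserted between consecutive copies of $F$ in $W'_t$, leaving the rest of the argument unchanged.
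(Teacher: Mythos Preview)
Your approach has a genuine gap. With the identified-copies construction $W_t$, the skip length $D$ is \emph{not} a constant depending only on $F$: it varies with the residue of the starting vertex $i$ modulo $l$. The point is that although the closed neighbourhood $N[\varphi(F)]$ always reaches spine position $i+l+1$, whether $i+l+1$ is itself a starting vertex depends on the phase of $i$ within the period-$l$ structure of $W_t$; when it is, one extra starting vertex is forbidden and the remaining matching lives in $Y'_{r-d-1}$, and when it is not, the constraint on $i+l+1$ is vacuous and the remaining matching lives in $Y'_{r-d}$. A concrete witness: take $F$ to be the ``broom'' --- the path $u_1u_2u_3u_4$ with an extra leaf pendant at $u_2$. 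Here $l=3$, the starting vertices in $W_t$ are exactly the spine positions $\not\equiv 2 \pmod 3$, and one checks directly that $D=d=3$ when $i\equiv 1\pmod 3$ but $D=d+1=4$ when $i\equiv 0\pmod 3$. So the single recurrence $g'(r)=g'(r-1)+g'(r-D)$ does not hold, and your appeal to ``translational symmetry'' does not rescue it: the forbidden zone has the same shape at every $i$, but its position relative to the set of starting vertices shifts.

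The paper's construction of $W'_t$ is designed to fix exactly this. Rather than identifying $u_{l+1}$ of one copy with $u_1$ of the next, it joins them by a new edge, so the period along the spine becomes $l+1$ instead of $l$. Now $i+l+1$ is automatically a starting vertex whenever $i$ is, the vertex set of $\varphi$ is the full block $\{i,\dots,i+l\}$ together with all of its off-spine descendants (exactly $|F|$ vertices), and $D=d+1$ uniformly, yielding the clean recurrence $g'(r)=g'(r-1)+g'(r-d-1)$. Your closing ``short buffer path'' remark is precisely this modification --- but it is not an optional convenience to sidestep case analysis; it is what makes the recurrence valid.
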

\begin{proof}
The construction and the proof are similar to those in the proof
of Proposition
\ref{Propo: Exists Y with s(F,Y) equiv 0 mod m}, and we
shall use the notation defined just before it. We define $W'_t$ as a
collection of $t$ disjoint copies of $F$, and we add an edge between
the vertex $u_{l+1}$ of the $i$'th copy and the vertex $u_1$ of the
$(i+1)$'th copy. We think of the first copy of $u_1$ as the root of
$W'_t$.

Let $P'$ be the path connecting the first copy of $u_1$ with the
last copy of $u_{l+1}$ and denote its vertices by $1,\dotsc,t(l+1)$
in the natural order. We define starting vertices in the same manner
as in the proof of Lemma \ref{Lem: Nullifying tree, non induced}.
The symmetry argument still holds, only now the period is $l+1$,
that is, if $1\le i\le (t-2)(l+1)+1$ is a starting vertex then so is
$i+l+1$. Also, if there are $d$ starting vertices between $1$ and
$l+1$, then there are $d$ starting vertices between every starting
vertex $i$ and $i+l$ and all in all there are $(t-1)d+1$ starting
vertices in $W'_t$.

Let $Y'_r$ be the subgraph of $W'_t$ composed of the minimal path of
$P$ containing the last $r$ starting vertices together with their
descendants through vertices that are not in $P$. Hence, $Y'_1$ is a
single vertex and $Y'_{d+1}$ is a copy of $F$ with an extra vertex
connected to $u_{l+1}$. Finally we define $g'(r)$ as the number of
induced $F$-matchings in $Y'_r$.

We wish to derive a recurrence formula for $g'(r)$. We count
induced $F$-matchings of $Y'_r$ by the membership of the first
vertex. The number of induced $F$-matchings that do not cover the
first vertex (who is also the first starting vertex) is exactly
$g'(r-1)$.

Consider matchings in which the first starting vertex $i$ is
covered. The embedding of $F$ covering $i$ can not cover vertices of
$P$ farther than $i+l$, since the diameter of $F$ is $l$. On the
other hand, the number of vertices in the subgraph made of the path
connecting $i$ to $i+l$ together with their descendants that are not
in $P$ is exactly $\sum b_i = |F|$. Hence $i+l$ is also covered by
the same embedding that covers $i$. Now, if $i+l+1$ is covered by
another embedding of $F$, then $\edge{i+l}{i+l+1}$ is spanned, which
is forbidden, so $i+l+1$ is not covered. Since there are $d$
starting vertices between $i$ and $i+l$, and since $i+l+1$ is a
starting vertex as well, we get that the number of such matchings is
exactly $g'(r-d-1)$. Therefore we have $g'(r) = g'(r-1) +
g'(r-d-1)$.

Clearly $g'(r) = 1$ for every $1 \leq r \leq d-1$, as the number of
vertices in $Y'_r$ in these cases is smaller than $|F|$. The value
of $g'(d)$ may be either $1$ or $2$, depending on whether $F$ may be
embedded into $Y_d$ or not. The value of $g'(d+1)$ can also be one
of a few options. Still, we extend $g'$ backwards by defining $g'(0)
= g'(d+1) - g'(d)$, $g'(-1) = g'(d) - g'(d-1)$, and $g'(-2) =
g'(d-1) - g'(d-2) = 0$. We complete the proof by applying Claim
\ref{claim: rec}.
\end{proof}

\section{Concluding discussion}
Our initial objective was to provide a simple and intuitive
explanation to the fact that almost all labeled trees have an even
number of independent sets. Indeed, there are nullifying trees $Z$
s.t.\ when a tree $T$ has a $Z$-leaf, the number of independent sets
in $T$ is even. Also, every fixed tree $Z$ appears as a $Z$-leaf in
a random tree with $n$ vertices with probability tending to one as
$n$ goes to infinity. Therefore almost all trees have an even number
of independent sets.

The simplicity of the explanation allowed vast generalizations ---
Theorems \ref{Thm: main thm, general} and \ref{Thm: main thm,
induced} above. In fact, the proof also works in other scenarios. If
a probability space of graphs has a property corresponding to the
probabilistic part of the proof, then the number of (induced)
$F$-matchings will be a zero residue in that probability space as
well.

As a concrete example, let $\mathcal{P}_n$ be the \emph{random
planar graph} of order $n$, that is, $\mathcal{P}_n$ is the set of
all simple labeled planar graphs with $n$ vertices endowed with the
uniform distribution. In \cite{A:mcdiarmid_&_steger_&_welsh2005} it
is shown that with probability exponentially close to one,
$\mathcal{P}_n$ has an $R$-leaf for every fixed rooted tree $R$.
Thus, by the above, the number of (induced) $F$-matchings is a zero
residue in a random planar graph. Notice that $\mathcal{P}_n$ is
connected with probability at least $1/e$ as shown in
\cite{A:mcdiarmid_&_steger_&_welsh2005}, so a potential simpler
strategy of proving the same result --- showing the existence of a
component having a zero residue number of (induced) $F$-matchings
--- will not suffice.

Similar results may be obtained for other random graphs models as
well. On the other hand, if we consider dense random graphs then a
different approach is required. For example, it is not clear how the
number of independent sets typically behaves as a residue for the
binomial random graph $G(n,1/2)$. Moreover, it is not difficult to
show that for $p=p(n)$ close to $1$ in the range in which the
maximum independent set of $G(n,p)$ is $\Theta(1)
>1$ asymptotically almost surely, the number of independent sets in
$G(n,p)$ is nearly uniformly distributed modulo any constant $m$.
See \cite{Sc2010} for several related results.

Our proof implies that the number of $F$-matchings in a random tree
of order $n$ is typically zero modulo any constant $m$ when
the size of $F$ grows slowly enough with $n$. It may be
interesting to find the maximal rate of growth for which this
property still holds.

\comment{The same holds for random planar graphs with a given number
of edges, $\mathcal{P}_{n,m=dn}$, by
\cite{IP:gerke_&_mcdiarmid_&_steger_&_weissl2005}; the random planar
graph process (with $m$ edges considered or accepted) by
\cite{A:gerke_&_schlatter_&_steger_&_taraz2007}; for the binomial
random graph $G(n,p)$ before the minimal degree becomes two by
common knowledge, and so on.}
\vspace{0.3cm}

\noindent
{\bf  Acknowledgment} We thank Alan Frieze for suggesting the use of the Joyal Correspondence in the proof of Lemma \ref{Lem: aas has R cong T_n^(u,v)}.

\bibliographystyle{abbrv}

\end{document}